\newcommand{\real}{\mathbb{R}}
\renewcommand{\phi}{\varphi}
\renewcommand{\epsilon}{\varepsilon}
\renewcommand{\kappa}{\varkappa}
\newcommand{\sgn}{\text{\bf sgn}\,}
\newcommand{\trs}{\top}
\newcommand{\br}{\mathbb{R}}
 \newcommand{\bldr}{\boldsymbol{r}}
 \newcommand{\bldv}{\boldsymbol{v}}
\newcommand{\blde}{\boldsymbol{e}}
\newcommand{\bldt}{\pmb{T}}
\newcommand{\bldn}{\pmb{N}}
\newtheorem{theorem}{Theorem}[section]
\newtheorem{remark}{Remark}[section]
\newtheorem{assumption}{Assumption}[section]
\newtheorem{lemma}{Lemma}[section]
\newtheorem{proposition}{Proposition}[section]
\newcommand{\so}{\text{\scriptsize $\mathcal{O}$}}
\newcommand{\spr}[2]{\left\langle #1; #2 \right\rangle}
\newcommand{\ov}[1]{\overline{#1}}
\newcommand{\blr}{r}
\newcommand{\dd}[1]{#1^{\prime\prime}}
\newcommand{\epf}{$\qquad \Box$}
\newcommand{\pp}{{\prime\prime}}
\title{Proofs of Technical Results Justifying and Illustrating an Algorithm of Navigation for Monitoring Unsteady Environmental Boundaries}
\date{}
\author{Alexey S. Matveev, Michael C. Hoy, Kirill S. Ovchinnikov, \\ Alexander M. Anisimov, and Andrey V.Savkin}
\begin{document}
\maketitle
\section{Introduction}
This paper deals with a problem of timely detection and monitoring of re-shaping environmental boundaries.
 This task is of interest for tracking of oil or chemical spills \cite{ClaFie07}, exploration of radioactively contaminated areas, and for many other missions, such as tracking forest fires \cite{CKBMLM06},
or contaminant clouds \cite{WTASZ05}, or harmful algae blooms \cite{PeDuJoPo12}, exploration of sea temperature
and salinity or hazardous weather conditions like tropical storms, etc.
 In fact, such missions are devoted to localization of a spatially distributed phenomenon whose boundary can typically be defined as the level set (isoline) where a certain scalar field (e.g., the radiation level or the concentration of a pollutant) assumes a critical value.
In many such missions, sensors observe the phenomenon only in a point-wise fashion at their locations.
\par
Networks of static sensors typically require high both deployment density and computational/communication loads to provide a good accuracy of observation \cite{NoMi03}.
More effective use of sensors is achieved in mobile networks, where each sensor can explore many locations and
the sensors may be driven to the explored boundary, thus concentrating the overall sensing capacity of the network on the structure of the main interest. 
\par
Autonomous unmanned mobile robots and robotic networks
have been greatly used in the recent past for a variety of such missions in hazardous and complex environments not only to avoid or mitigate risks to human life and health but also due to their
lightweights, inexpensive components, and low power consumptions, see
e.g. \cite{QBGG05,AKP06,GHH04,WSS08,KHBMC05,ClaFi05,WTASZ05,CKBMLM06,KeBeMa04,HJMNTBM05,BertKeMar04,ClaFie07,JAHB09} and references therein.
These robots are often subjected to limitations on communication and so should be equipped with navigation and control systems that enable them to autonomously operate either constantly or for extended periods of time and distance.
\par
Recently, designs of such systems have gained much interest in control community.
Many works in this area assume access to the field gradient or even higher derivatives; see, e.g., \cite{MaBe03,SrRaKu08,ZhLe10,HsLoKu07} and literature therein. For example, they present gradient-based networked contour estimation \cite{MaBe03,BertKeMar04,SrRaKu08}, centralized control laws stemming from the 'snake' algorithms in image segmentation \cite{MaBe03,BertKeMar04},
potential-based approach \cite{HsLoKu07}, estimation of the gradient and Hessian of a noisy field for driving the center of a rigid formation of sensors along a level curve \cite{ZhLe10}.
However, spatial derivatives are often not directly measured.
Meanwhile gradient estimation needs tight-flocking concentration of the sensors near the examined point, contrary to efficient utilization of the network, which ideally means its uniform distribution over the explored boundary. Data exchange needed for this estimation
may be critically degraded by limitations on communication. Finally, sensor-based derivative estimators are prone to noise amplification, and their implementation is an intricate problem in practical setting \cite{AhAb07,chartrand11,VaKh08}.
This carries a threat of performance degradation, puts strong extra burden on controller tuning, and may drastically increase the overall computational load \cite{AhAb07}.
\par
A single mobile sensor with a point-wise access to the field value only is the main target for gradient-free approaches; see e.g., \cite{BaRe03,KeBeMa04,CKBMLM06,Anders07} and literature therein.
Switches between two steering angles, which are carried out whenever the field reading crosses the desired value,
were advocated in \cite{ZhBer07,JAHB09}; a similar approach with more alternative angles was applied to an underwater vehicle
in \cite{BaRe03}. Such methods result in jagged trajectories
and rely in effect on systematic sideways maneuvers to collect enough data, which gives rise to concerns about waste of resources.
A method to control an aerial vehicle based on segmentation of the infrared images of the forest fire was presented in \cite{CKBMLM06}.
These works are based, more or less, on heuristics and provide no rigorous justification of the control laws and guarantees of success.
In \cite{BarBail07}, a linear PD controller fed by the field value was proposed for steering a Dubins-car like vehicle with unlimited control range along a level curve of a radial harmonic field, and was supplied with a local convergence result.
A sliding mode controller
for tracking environmental level sets without gradient estimation was offered in
\cite{MaTeSa12}.
\par
On the whole, previous research was confined to only steady fields for which the environmental boundary does not change over time.
On the contrary, such boundaries are almost never steady in the real world, due to dispersion, advection, transport by wind, drift by water current, etc.
Meanwhile the theory of tracking dynamic level sets lies in the uncharted territory. 
As a particular case, this topic includes navigation of a mobile robot towards an unknowingly maneuvering target and further escorting it with a desired margin\footnote{Which should be given by the desired field value.} on the basis of a single measurement that decays away from the target, like the strength of the infrared, acoustic, or electromagnetic signal, or minus the distance to the target.
Such navigation is of interest in many areas \cite{ADB04,GS04,MTS11ronly}; it carries a potential to reduce the hardware complexity and cost and to improve target pursuit reliability.
To the best of our knowledge, rigorous analysis of such a navigation law was offered in \cite{MTS11ronly} for only a very special case of the field --- the distance to a moving Dubins-like car. However the results of \cite{MTS11ronly} are not applicable to more general dynamic fields.
\par
This paper is aimed at filling the above gaps concerned with dynamic fields, while disembarrassing boundary tracking control from the intricacies related to gradient estimation and sideways fluctuations.
To this end, we consider an under-actuated non-holonomic Dubins-car type mobile robot. It travels with a constant speed over planar curves of bounded curvatures and is controlled by the upper limited angular velocity.
The tracked boundary is given by a generic dynamic scalar field. The robot has access to only the field value at the current location and the rate at which this reading evolves over time via, e.g., numerical differentiation.
\par
The proposed navigation law is non-demanding with respect to computation and motion. It develops some ideas set forth in \cite{MaTeSa12}; e.g.,  gradient estimates and systematic exploration maneuvers are not employed. Whereas the results of \cite{MaTeSa12} are not applicable to unsteady fields, the objective of this paper is to show that the ideas from \cite{MaTeSa12} basically remain viable for generic dynamic fields.
\par
The extended introduction and discussion of the proposed control law are given in the paper submitted by the authors to the IFAC journal Automatica. 
This text basically contains the proofs of the technical facts underlying justification of the convergence and performance of the proposed algorithm in that paper, as well as illustrations of the main theoretical results, which were not included into that paper due to the length limitations. To make the current text logically consistent, we reproduce the problem statement and notations.

\section{Problem Setup and the Control Algorithm}\label{sec1}
\label{sec1}
A mobile robot travels in a plane with a constant speed $v$ and is controlled by the time-varying angular velocity $u$ limited by a given constant $\ov{u}$. There is an unknown and dynamic scalar field $D(t,\boldsymbol{r})$, i.e., a quantity $D \in \br$ that depends on time $t$ and point $\bldr \in \br^2$ in the plane.
Here $\boldsymbol{r}:= (x,y)^\trs$ stands for the pair of the absolute Cartesian
coordinates $x,y$ in $\br^2$.
It is required to drive the robot to the spatial isoline $D(t,\bldr) = d_0$ where the
field assumes a given value $d_0$ and to subsequently drive the robot in a close proximity of this isoline so that the robot circulates along it. The robot has access to the field
value $d(t):= D(t,x,y)$ at the robot's current location $x=x(t), y =y(t)$ and to the rate $\dot{d}(t)$ at which this measurement
evolves over time $t$, via, e.g., numerical differentiation of $d(t)$.
However, neither the partial derivative $D^\prime_t$, nor $D^\prime_x$, nor $D^\prime_y$ is accessible.
\par
The kinematic model of the robot is as follows:
\begin{equation}
\label{1}
\begin{array}{l}
\dot{x} = v \cos \theta,
\\
\dot{y} = v \sin \theta,
\end{array}
\quad
\begin{array}{l}
\dot{\theta} = u , \\ |u|\leq \overline{u},
\end{array}
\quad
\begin{array}{l}
x(0) = x_{\text{in}}
\\
y(0) = y_{\text{in}}
\end{array}, \; \theta(0) = \theta_{\text{in}},
\end{equation}
where $\theta$ is the orientation angle of the robot. 
These equations are classically used
to describe, for example, wheeled robots and missiles (see e.g.
\cite{ManS04,ManS06,TS209} and the literature therein).
\par
It is required to design a controller such that $D[t,x(t),y(t)]
\to  d_0$ as $t \to \infty$.
\par
In this paper, we examine the following navigation law:
\begin{equation}
\label{c.a}
u(t)=-\sgn\{\dot{d}(t)+\chi[d(t)-d_{0} ] \} \bar{u},
\end{equation}
where $\sgn a$ is the sign of $a$ ($\sgn 0:= 0$)
and $\chi(\cdot)$ is a linear function with saturation:
\begin{equation}
\label{chi}
 \chi(p):=
\begin{cases}
\gamma p & \text{if}\; |p|\leq \delta \\
\sgn(p)\mu & \text{otherwise}
\end{cases}, \qquad \mu :=\gamma \delta .
\end{equation}
Here $\gamma >0$ and $\delta >0$ are design parameters.
\par
For the discontinuous controller \eqref{c.a}, the desired
dynamics \cite{UT92} is given by $\dot{d}(t) = - \chi[d(t)-d_0]$.
Since it is unrealistic to desire large $\dot{d}$,
saturation is a reasonable option.
\section{Quantities characterizing dynamic fields}
\label{sec.quant}
\setcounter{equation}{0}
To judge the feasibility of the control objective and to tune the controller, we need some characteristics of the unsteady field $D(\cdot)$ or their estimates. Now we introduce the relevant quantities, along with employed notations.
\begin{itemize}
\item $\bldr$ --- a point on the plane;
\item $\spr{\cdot}{\cdot}$ --- the standard inner product in the plane;
\item $\mathscr{R}_\beta = \left( \begin{smallmatrix} \cos \beta & - \sin \beta \\ \sin \beta & \cos \beta\end{smallmatrix}\right)$ --- the matrix of counter-clockwise rotation through angle $\beta$;
\item $\nabla = \left( \frac{\partial}{\partial x} , \frac{\partial}{\partial y}  \right)^\trs$ --- the spatial gradient;
    \item $D^{\prime\prime}$ --- the spatial Hessian;
\item $I(t,d_\ast) := \{\bldr: D(t,\bldr) = d_\ast \}$ --- the spatial isoline;
\item $[\bldt,\bldn]= [\bldt(t,\bldr),\bldn(t,\bldr)]$ --- the Frenet frame of the spatial isoline $I[t,d_\ast]$ with $d_\ast:= D(t,\bldr)$, i.e.,
    $
\bldn(t,\bldr) = \frac{\nabla D(t,\bldr)}{\|\nabla D (t,\bldr)\|}
$
and the unit tangent vector $\bldt(t,\bldr)$ is oriented so that when traveling on $I[t,d_\ast]$ one has the domain of greater values $G_t^{d_\ast}:=\{ \bldr^\prime : D(t,\bldr^\prime) > d_\ast \}$ to the left;
\item $\varkappa$ --- the signed curvature of the spatial isoline;\footnote{This is positive and negative on the convexities and concavities, respectively, of the boundary of $G^{d_\ast}_t$.}
\item $\bldr_+^{d_\ast(\cdot)}(\Delta t| t,\bldr)$ --- the nearest (to $\bldr$) point of intersection  between the ordinate axis of the Frenet frame $[\bldt(t,\bldr),\bldn(t,\bldr)]$ and the displaced isoline $I[t+\Delta t,d_\ast(t+\Delta t)]$, where the given smooth function $d_\ast(\cdot)$ is such that $d_\ast(t)= D(t,\bldr)$; see Fig.~\ref{mov.isl};
    \item $p^{d_\ast(\cdot)}(\Delta t| t,\bldr)$ --- the ordinate of $\bldr_+^{d_\ast(\cdot)}(\Delta t| t,\bldr)$;
    \item $\lambda^{d_\ast(\cdot)}(t,\bldr)$ --- the front velocity of the isoline:
    $
\lambda^{d_\ast(\cdot)}(t,\bldr):=\lim_{\Delta t\to 0}\frac{p^{d_\ast(\cdot)}(\Delta t| t,\bldr)}{\Delta t };
$
if $d_\ast(\cdot) \equiv \text{const}$, the index $^{d_\ast(\cdot)}$ is dropped in the last three notations;
\item $\alpha(t,\bldr)$ --- the front acceleration of the spatial isoline $I[t,d_\ast], d_\ast:= D(t,\bldr)$ at time $t$ at the point $\bldr$:
    \begin{equation}
\label{alpha_def}
\alpha(t,r):= \lim_{\Delta t\to 0}\frac{\lambda[t+\Delta t, \bldr_+(\Delta t)] - \lambda[t,\bldr]}{\Delta t },
\end{equation}
where $\bldr_+(\Delta t) := \bldr_+(\Delta t| t,\bldr)$;
\item $\Delta \varphi(\Delta t|t,\bldr)$ --- the angular displacement of $T[t+\Delta t, \bldr_+(\Delta t) ]$ with respect to $T[t,\bldr]$; see Fig.~\ref{mov.isl};
\item $\omega(t,\bldr)$ --- the angular velocity of rotation of the isoline $I[t,d_\ast], d_\ast:= D(t,\bldr)$, i.e.,
    $
\omega(t,\bldr):=\lim_{\Delta t\to 0}\frac{\Delta \varphi(\Delta t)}{\Delta t };
$
\item $q(\Delta d| t,\bldr)$ --- the ordinate of the nearest (to $\bldr$) intersection between the $\bldn$-axis of the Frenet frame $[\bldt(t,\bldr),\bldn(t,\bldr)]$ and the isoline $I(t| t,d_\ast+\Delta d)$;
\item $\rho(t,\bldr)$ --- the density of isolines at time $t$ at point $\bldr$:
$$
\rho(t,\bldr):=\lim_{\Delta d \to 0}\frac{\Delta d}{q(\Delta d| t,\bldr)}; \footnote{It assesses the ``number'' of isolines within the unit distance from $I(t,d_\ast)$, where the ``number'' is evaluated by the discrepancy in the values of $D(\cdot)$ observed on these isolines.}
$$
\item $ v_\rho(t,\bldr)$ --- the evolutional (proportional) growth rate of the above density at time $t$ at point $\bldr$:
\begin{equation}
\label{vrho.def}
v_\rho(t,\bldr):= \lim_{\Delta t\to 0}\frac{\rho[t+\Delta t, \bldr_+(\Delta t)]- \rho(t,\bldr)}{\rho(t,\bldr) \Delta t };
\end{equation}
\item $ \tau_\rho(t,\bldr)$ --- the tangential (proportional) growth rate of the isolines density at time $t$ at point $\bldr$:
\begin{equation}
\label{beta_def}
 \tau_\rho (t,r):= \lim_{\Delta s \to 0}\frac{\rho(t, r+ \bldt \Delta s ) -  \rho(t,r)}{\rho(t,\bldr) \Delta s} ;
\end{equation}
\item $ n_\rho(t,\bldr)$ --- the normal (proportional) growth rate of the isolines density at time $t$ at point $\bldr$:
\begin{equation}
\label{njrm_def}
n_\rho (t,r):= \lim_{\Delta s \to 0}\frac{ \rho(t, r+ \bldn \Delta s ) -   \rho(t,r)}{\rho(t,\bldr) \Delta s} .
\end{equation}
\end{itemize}
\par
\begin{figure}
\centering
\scalebox{0.55}{\includegraphics{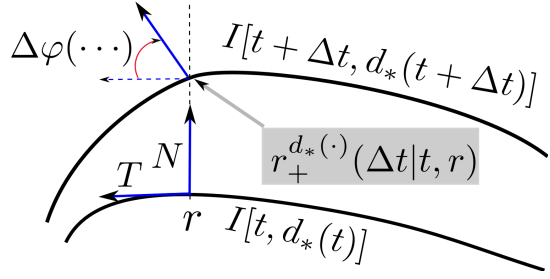}}
\caption{Two close isolines.}
\label{mov.isl}
\end{figure}
\par
Now we show that under minor technical assumptions, these quantities are well-defined, and offer
relationships that explicitly link them to $D(\cdot)$.
\begin{lemma}
\label{lem.relation}
 If $D(\cdot,\cdot)$ is twice continuously differentiable in a vicinity of $(t,\bldr)$ and $\nabla D(t,\bldr) \neq 0$, the quantities from Sec.~{\rm \ref{sec.quant}} are well-defined and the following relations hold at $(t,\bldr)$
 \begin{gather}
 \label{speed1}
 \lambda=-\frac{D'_{t}}{\|\nabla D \|}, \quad  \lambda^{d_\ast(\cdot)} = \lambda + \frac{\dot{d}_\ast}{\|\nabla D \|}, \quad \rho = \|\nabla D\|,
 \\  \bldr_+(dt):= \bldr_+(dt|t,\bldr) = \bldr + \lambda \bldn dt + \so(dt),
  \label{speed1a}
 \\
 \label{alpha}
 v_\rho = \frac{\spr{\nabla D^\prime_{t}+\lambda \dd{D}\bldn}{\bldn}}{\|\nabla D \|},
 \omega= - \frac{\spr{\nabla D^\prime_{t}+\lambda \dd{D}\bldn}{\bldt}}{\|\nabla D\|}, \\
 \label{alpha.true}
\alpha=- \frac{D^{\prime\prime}_{tt} + \lambda \left\langle \nabla D^\prime_t ; \bldn \right\rangle}{\|\nabla D\|}  - \lambda  v_\rho ,
\\
 \label{tau}
\varkappa = - \frac{\spr{\dd{D}\bldt}{\bldt}}{\|\nabla D\|},  \tau_\rho =\frac{\spr{D^{\prime\prime}\bldn}{\bldt}}{\left\| \nabla D \right\|} ,
 n_\rho =\frac{\spr{D^{\prime\prime}\bldn}{\bldn}}{\left\| \nabla D \right\|} .
\end{gather}
\end{lemma}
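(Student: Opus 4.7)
The overall strategy is a single application of the implicit function theorem to unlock the quantity $\bldr_+$, after which every formula reduces to first- or second-order Taylor expansion of $D$ along explicit curves. Since $\nabla D(t,\bldr)\neq 0$ and $D$ is $C^2$, the function
$F(\Delta t, s) := D\bigl(t+\Delta t,\,\bldr + s\bldn(t,\bldr)\bigr) - d_\ast(t+\Delta t)$
satisfies $F(0,0)=0$ and $\partial_s F(0,0)=\langle\nabla D;\bldn\rangle = \|\nabla D\|\neq 0$, so there is a unique $C^1$ function $s(\Delta t)$ with $s(0)=0$ and $F(\Delta t,s(\Delta t))\equiv 0$. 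Implicit differentiation yields $s'(0) = (\dot d_\ast - D'_t)/\|\nabla D\|$, and this single identity delivers the formulas for $\lambda^{d_\ast(\cdot)}$, $\lambda$ (take $\dot d_\ast=0$), and the expansion \eqref{speed1a} for $\bldr_+(\Delta t)$. An analogous one-dimensional implicit-function argument applied to $s\mapsto D(t,\bldr+s\bldn)-d_\ast-\Delta d$ yields $q(\Delta d|t,\bldr)=\Delta d/\|\nabla D\|+\so(\Delta d)$ and hence $\rho=\|\nabla D\|$.

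For $\varkappa$ I would parametrize the spatial isoline by arc length, differentiate the identity $D(\bldr(\sigma))\equiv d_\ast$ twice and use the orientation convention (the greater-values domain to the left, so $\bldn=\mathscr{R}_{\pi/2}\bldt$ and the Frenet equation reads $\bldt'=\varkappa\bldn$); combining $\spr{D''\bldt}{\bldt}+\varkappa\|\nabla D\|=0$ gives \eqref{tau}$_1$. For $\tau_\rho$ and $n_\rho$ I just differentiate $\rho=\|\nabla D\|=\sqrt{\spr{\nabla D}{\nabla D}}$ in the directions $\bldt$ and $\bldn$ at fixed $t$, using $\nabla\|\nabla D\|=D''\nabla D/\|\nabla D\|=D''\bldn$; this yields \eqref{tau}$_{2,3}$.

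For $v_\rho$ I differentiate the composition $\Delta t\mapsto \|\nabla D(t+\Delta t,\bldr_+(\Delta t))\|$ at $\Delta t=0$ using \eqref{speed1a}: the chain rule gives $(\nabla D)'|_0=\nabla D'_t+\lambda D''\bldn$, and projecting onto $\bldn=\nabla D/\|\nabla D\|$ produces \eqref{alpha}$_1$. For $\omega$ I differentiate $\bldn(t+\Delta t,\bldr_+(\Delta t))=\nabla D/\|\nabla D\|$ the same way; because the normal is unit, the derivative is proportional to $\bldt$, and since $\bldn=\mathscr{R}_{\pi/2}\bldt$ a rotation by $\Delta\varphi$ sends $d\bldn/dt=-\omega\bldt$, giving $\omega=-\spr{d\bldn/dt}{\bldt}$; the $\spr{\cdot}{\nabla D}$-contribution to the quotient rule vanishes because $\nabla D\perp\bldt$, leaving \eqref{alpha}$_2$.

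The last item, \eqref{alpha.true}, is the only place where one must combine several ingredients: write $\lambda[t+\Delta t,\bldr_+(\Delta t)]=-D'_t/\|\nabla D\|$ evaluated along the trajectory $(t+\Delta t,\bldr_+(\Delta t))$ and apply the quotient rule at $\Delta t=0$. The total time derivative of the numerator is $-(D''_{tt}+\lambda\spr{\nabla D'_t}{\bldn})$, the total proportional time derivative of the denominator is $v_\rho$ by definition, and substituting $-D'_t/\|\nabla D\|=\lambda$ in the second term yields exactly \eqref{alpha.true}. The main obstacle I anticipate is bookkeeping: one has to keep the sign conventions (orientation of $\bldt$, sign of $\varkappa$, the identity $\bldn=\mathscr{R}_{\pi/2}\bldt$) consistent throughout, and verify carefully that the ``nearest intersection'' in the definitions of $\bldr_+$ and $q$ is well-defined on a neighborhood of $\Delta t=0$ or $\Delta d=0$ --- which follows from the uniqueness clause of the implicit function theorem once one restricts to a small enough neighborhood of $\bldr$ where $\nabla D\neq 0$ and no spurious intersections occur.
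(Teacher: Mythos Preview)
Your proposal is correct and follows essentially the same route as the paper: the implicit function theorem for $\lambda$, $\lambda^{d_\ast(\cdot)}$, $\rho$, and \eqref{speed1a}, then first-order expansion of $\nabla D$, $\|\nabla D\|$, $\bldn$, and $-D'_t/\|\nabla D\|$ along the curve $(t+\Delta t,\bldr_+(\Delta t))$ to obtain $v_\rho$, $\omega$, and $\alpha$, with $\tau_\rho,n_\rho$ by spatial differentiation of $\|\nabla D\|$ and $\varkappa$ treated as standard. The only cosmetic difference is that you phrase things via the chain rule at $\Delta t=0$ while the paper writes out the $\so(dt)$ Taylor remainders explicitly, and you supply a short derivation of the curvature formula that the paper simply cites as well known.
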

\begin{proof}
The first claim and \eqref{speed1}, \eqref{speed1a} follow from the implicit function theorem \cite{KrPa02}. With \eqref{speed1}, \eqref{speed1a} in mind,
\begin{gather}
\nonumber
\nabla D\left[t+dt,\bldr_+(dt)\right]=
\nabla D[t+dt,\bldr +\lambda \bldn dt+\so(dt)]
= \nabla D+[\nabla D^\prime_{t}+\lambda \dd{D}\bldn]dt +\so(dt),
\\
\nonumber
\rho(t+\Delta t, \bldr_+(dt)) \overset{\text{\eqref{speed1}}}{=\!=}
\left\| \nabla D\left[t+dt,\bldr_+(dt)\right] \right\|
= \left\| \nabla D+[\nabla D^\prime_{t}+ \lambda \dd{D} \bldn]dt +\so(dt) \right\|
\\
= \left\| \nabla D\right\| + \frac{\left\langle \nabla D; \nabla D^\prime_{t}+\lambda\dd{D} \bldn \right\rangle}{\left\| \nabla D\right\|} dt + \so(dt)
 =
\left\| \nabla D\right\| + \left\langle \bldn; \nabla D^\prime_{t}+ \lambda \dd{D} \bldn \right\rangle dt + \so(dt),
\label{expan}
\end{gather}
which gives the first formula in \eqref{alpha}. Furthermore,
\begin{multline*}
\bldn\left[t+dt,\bldr_+(dt)\right] = \frac{\nabla D\left[t+dt,\bldr_+(dt)\right]}{\left\| \nabla D\left[t+dt,\bldr_+(dt)\right] \right\|}
= \bldn + \so(dt) + \left[ \frac{\nabla D^\prime_{t}+ \lambda \dd{D} \bldn}{\left\| \nabla D\right\|} - \frac{\nabla D}{\left\| \nabla D\right\|^3} \left\langle \nabla D; \nabla D^\prime_{t}+ \lambda  \dd{D}\bldn \right\rangle  \right] dt
\\
=
 \bldn + \frac{1}{\left\| \nabla D\right\|}\left[\nabla D^\prime_{t}+ \lambda \dd{D} \bldn - \bldn \left\langle \bldn; \nabla D^\prime_{t}+ \lambda  \dd{D}\bldn \right\rangle  \right] dt
+ \so(dt)
\overset{\text{(a)}}{=} \bldn + \frac{\left\langle \bldt; \nabla D^\prime_{t}+ \lambda  \dd{D}\bldn \right\rangle} {\left\| \nabla D\right\|} \bldt dt   + \so(dt),
\end{multline*}
 where (a) holds since $\pmb{W} = \langle \pmb{W},\bldt \rangle \bldt + \langle \pmb{W},\bldn \rangle \bldn \; \forall \pmb{W} \in \real^2$. In the Frenet frame $(\bldt,\bldn)$, we have
\begin{equation*}
\bldn \left[t+dt,\bldr_+(dt)\right] = \left( \begin{array}{c}
-\sin \Delta \varphi(dt|t,\bldr)
\\
\cos \Delta \varphi(dt|t,\bldr)
\end{array}\right)
= \bldn + \left( \begin{array}{c}
-\cos 0
\\
- \sin 0
\end{array}\right) \omega dt + \so(dt)
 =
\bldn - \omega \bldt  dt + \so(dt).
\end{equation*}
By equating the coefficients prefacing $\bldt dt$ in the last two expressions, we get the second formula in \eqref{alpha}. Furthermore,
\begin{multline*}
\lambda(t+dt, \bldr_+(dt)) \overset{\text{\eqref{speed1}}}{=}
-\frac{D^\prime_{t}[t+dt, \bldr_+(dt)]}{\|\nabla D(t+dt, \bldr_+(dt))\|}
\overset{\text{\eqref{expan}}}{=}
\\
\lambda - \frac{D^{\prime\prime}_{tt} dt + \left\langle \nabla D^\prime_t ; \bldr_+(dt)-r \right\rangle}{\|\nabla D\|} + D^\prime_t \frac{\left\langle \bldn; \nabla D'_{t}+\lambda D^{\prime\prime} \bldn \right\rangle}{\|\nabla D\|^2} dt +\so(dt)
\\
\overset{\text{\eqref{speed1}}}{=} \lambda - \frac{D^{\prime\prime}_{tt} + \lambda \left\langle \nabla D^\prime_t ; \bldn \right\rangle}{\|\nabla D\|} dt - \lambda  \frac{\left\langle \bldn; \nabla D'_{t}+\lambda D^{\prime\prime} \bldn \right\rangle}{\|\nabla D\|} dt +\so(dt)
\\
\overset{\text{(b)}}{=}
\lambda - \frac{D^{\prime\prime}_{tt} + \lambda \left\langle \nabla D^\prime_t ; \bldn \right\rangle}{\|\nabla D\|} dt - \lambda  v_\rho dt +\so(dt)
\Rightarrow \text{\eqref{alpha.true}},
\end{multline*}
where (b) follows from the first formula in \eqref{alpha}.
The first equation in \eqref{tau} is well known, the second one follows from the transformation
\begin{equation*}
\rho(t, \bldr+ \bldt  ds ) = \left\| \nabla D[t, \bldr + \bldt ds] \right\| = \rho(t,\bldr)  + \frac{\spr{D^{\prime\prime}\bldt}{\nabla D}}{\left\| \nabla D \right\|} ds
+ \so(ds)
= \rho(t,\bldr) + \spr{D^{\prime\prime}\bldt}{\bldn} ds + \so(ds),
\end{equation*}
the third equation in \eqref{tau} is established likewise.
\end{proof}

\section{Assumptions and the Main Theoretical Result}\label{subsec.ass}
\setcounter{equation}{0}
To simplify the matters, we suppose that the operational zone of the robot is 
characterized by the extreme values $d_- \leq d_+$ taken by the field in it
\begin{equation}
\label{m}
\mathcal{M} := \{(t,\bldr): d_- \leq D(t,\bldr) \leq d_+\}
\end{equation}
and contains the required isoline $d_- \leq d_0 \leq d_+$. 
\par
An extended discussion and justification of the following assumptions and results are presented in the main text supported by this paper. 
\begin{proposition}
\label{lem.23}
Suppose that the moving robot remains on the isoline
$D[t,r(t)] \equiv d_0$, in a vicinity of which, the field is twice continuously differentiable and $\nabla D(\cdot,\cdot) \neq 0$. Then at any time the front speed of the isoline at the robot's location does not exceed the speed $v$ of the robot
\begin{equation}
\label{speed}
\left| \lambda[t,\bldr(t)] \right| \leq v,
\end{equation}
\mbox{the robot's velocity $\bldv = v \blde,  \blde=\left( \cos \theta, \sin \theta \right)^{\top}$
has the form}
\begin{equation}
\label{vec.c}
\bldv = \lambda \bldn \pm v_T  \bldt , \; \text{\rm where} \; v_T:= \sqrt{v^2-\lambda^2} ,
\end{equation}
and the following inequality is true
\begin{equation}
\label{accel}
\left| \pm 2 \omega  + \alpha v_T^{-1} + \kappa v_T\right| \leq \overline{u}.
\end{equation}
In $\pm$, the sign $+$ is taken if the robot travels along the isoline so that the domain $\{\bldr : D(t,\bldr) > d_0\}$ is to the left, and $-$ is taken otherwise.
\end{proposition}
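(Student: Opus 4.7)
The first two assertions will come from a single differentiation of the on-isoline identity $D[t,\bldr(t)]\equiv d_0$. By the chain rule, $D^\prime_t+\langle\nabla D,\bldv\rangle=0$, and substituting $\nabla D=\rho\bldn$ and $\lambda=-D^\prime_t/\rho$ from Lemma~\ref{lem.relation} immediately yields $\langle\bldn,\bldv\rangle=\lambda$. Combined with $\|\bldv\|=v$, the orthogonal decomposition of $\bldv$ in the Frenet basis $(\bldn,\bldt)$ delivers both $|\lambda|\le v$, which is \eqref{speed}, and $\bldv=\lambda\bldn\pm v_T\bldt$ with $v_T=\sqrt{v^2-\lambda^2}$, which is \eqref{vec.c}. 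The sign $+$ arises precisely when the tangential component of $\bldv$ is aligned with $\bldt$, and the orientation convention on $\bldt$ translates this into the requirement that $\{D>d_0\}$ lies on the robot's left.

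For \eqref{accel}, my plan is to differentiate the on-isoline identity a second time and solve for $u=\dot\theta$. This gives $D^{\prime\prime}_{tt}+2\langle\nabla D^\prime_t,\bldv\rangle+\langle D^{\prime\prime}\bldv,\bldv\rangle+\langle\nabla D,\dot{\bldv}\rangle=0$. Because $\|\bldv\|\equiv v$ is constant, $\dot{\bldv}=v\dot\theta\,\blde^\perp$; writing $\blde=\bldv/v=(\lambda/v)\bldn\pm(v_T/v)\bldt$ in the Frenet basis and rotating by $\pi/2$ gives $\blde^\perp=\pm(v_T/v)\bldn-(\lambda/v)\bldt$. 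Consequently $\langle\nabla D,\dot{\bldv}\rangle=\pm\rho v_T\dot\theta$, which is the only occurrence of the unknown $u=\dot\theta$ in the twice-differentiated identity.

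The remaining terms will be rewritten in the $(\bldn,\bldt)$ basis via Lemma~\ref{lem.relation}. Expanding $\langle D^{\prime\prime}\bldv,\bldv\rangle=\lambda^2\langle D^{\prime\prime}\bldn,\bldn\rangle\pm 2\lambda v_T\langle D^{\prime\prime}\bldn,\bldt\rangle+v_T^2\langle D^{\prime\prime}\bldt,\bldt\rangle$ and invoking \eqref{tau} turns the three coefficients into $\rho n_\rho$, $\rho\tau_\rho$, $-\rho\kappa$; the first and second formulas in \eqref{alpha} rearrange to $\langle\bldn,\nabla D^\prime_t\rangle=\rho v_\rho-\lambda\rho n_\rho$ and $\langle\bldt,\nabla D^\prime_t\rangle=-\rho\omega-\lambda\rho\tau_\rho$; and \eqref{alpha.true} yields $D^{\prime\prime}_{tt}=-\rho\alpha-2\lambda\rho v_\rho+\lambda^2\rho n_\rho$. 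Substituting and collecting, I expect every contribution containing $\lambda\rho v_\rho$, $\lambda^2\rho n_\rho$, and the cross-coupling $\lambda v_T\rho\tau_\rho$ to cancel pairwise, leaving only $\pm\rho v_T\dot\theta=\rho\alpha\pm 2v_T\rho\omega+v_T^2\rho\kappa$. Dividing by $\pm\rho v_T$ gives $\dot\theta=\pm\alpha/v_T+2\omega\pm v_T\kappa$, and applying $|u|\le\bar u$ produces \eqref{accel}; in the $-$-branch the outer absolute value absorbs the common minus sign, matching the $\pm 2\omega$ in the statement.

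The main obstacle is purely bookkeeping rather than conceptual: the two $\pm$ branches must be tracked in parallel through a simultaneous substitution of roughly half a dozen terms, and the contributions involving $v_\rho$, $n_\rho$, and $\lambda v_T\tau_\rho$ must be verified to annihilate exactly. Once these cancellations are pinned down, the final inequality is immediate.
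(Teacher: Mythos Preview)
Your argument is correct: the first differentiation of $D[t,\bldr(t)]\equiv d_0$ gives \eqref{speed} and \eqref{vec.c} exactly as you say, and the second differentiation, after the substitutions you list, does collapse to $\pm\rho v_T\dot\theta=\rho\alpha\pm 2v_T\rho\omega+v_T^2\rho\kappa$, from which \eqref{accel} follows. The cancellations of the $\lambda\rho v_\rho$, $\lambda^2\rho n_\rho$, and $\lambda v_T\rho\tau_\rho$ terms that you flag as the ``main obstacle'' indeed go through; each appears with coefficients summing to zero.

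Your route, however, differs from the one the paper sets up. The paper does not spell out the proof of the proposition in full, but Section~6 supplies the lemma it declares to ``underlie'' that proof: the incremental formulas \eqref{lrho}, \eqref{frenet-serrat}, \eqref{nnn}, \eqref{n=omega} describing how $\lambda$, $\bldn$, $\bldt$ change under infinitesimal displacements in the $\bldt$-, $\bldn$-, and time-directions. The intended argument is therefore geometric: differentiate the decomposition $\bldv=\lambda\bldn\pm v_T\bldt$ along the trajectory, using those increment formulas together with the definition \eqref{alpha_def} of $\alpha$ to obtain $\dot\lambda$, $\dot\bldn$, $\dot\bldt$ (one finds $\dot\lambda=\alpha\pm v_T\omega$ and $\dot\bldn=-(\omega\pm\kappa v_T)\bldt$), and then match $\dot\bldv$ against $u\,\mathscr{R}_{\pi/2}\bldv$. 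You instead differentiate the scalar identity twice in terms of the raw partial derivatives of $D$ and only afterwards translate via Lemma~\ref{lem.relation}. Both approaches are sound and of comparable length; yours stays closer to the analytic data of $D$ and avoids any appeal to Section~6, while the paper's route keeps the computation in the intrinsic isoline quantities $\lambda,\omega,\alpha,\kappa$ from the outset, so that the auxiliary parameters $v_\rho,n_\rho,\tau_\rho$ never enter and no cancellations need to be tracked.
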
 
\begin{assumption}
\label{ass.upr}
The field $D(\cdot,\cdot)$ is twice continuously differentiable in the domain \eqref{m} and there exist constants $\Delta_\lambda >0$ and $\Delta_u >0$ such that the following inequalities hold everywhere in $\mathcal{M}$:
\begin{gather}
\label{deltalambda1}
\left| \lambda \right| \leq v - \Delta_\lambda, \\
\label{deltalambda}
\left| \pm 2 \omega  + \frac{\alpha}{\sqrt{v^2-\lambda^2}} + \kappa \sqrt{v^2-\lambda^2}\right| \leq \overline{u} - \Delta_u ,
\end{gather}
where \eqref{deltalambda} is true with the both signs in $\pm$.
\end{assumption}
\par
Since the robot's path is smooth, it is reasonable to exclude isoline singularities. A common guarantee of their nonoccurrence is that the field has no critical points \cite{Thorpe79}. 
\begin{assumption}
\label{ass.grad}
At any time, the zone \eqref{m} does not contain critical points $\bldr$ of the field, i.e., such that $\nabla D =0$. Moreover,
this property does not degrade: there is $b_\rho >0$ such that $\rho(t,\bldr)=\|\nabla D (t,\bldr)\| \geq b_\rho^{-1} \; \forall (t,\bldr) \in \mathcal{M}$.
\end{assumption}
The next assumption is typically fulfilled in the real world, where physical quantities take bounded values.
\begin{assumption}
\label{ass.last}
There exist $b_\lambda, b_\tau, b_n, b_\varkappa, b_v, b_\alpha \in \br$ such that the following inequalities hold in the set \eqref{m}:
\begin{equation}
\label{estim}
|\lambda| \leq b_\lambda, \quad |\tau_\rho| \leq b_\tau, \quad |n_\rho| \leq b_n,
\quad |\varkappa| \leq b_\varkappa, \quad |v_\rho| \leq b_v, \quad |\alpha| \leq b_\alpha.
\end{equation}
\end{assumption}
 Under the control law \eqref{c.a}, the robot moves with $u\equiv \pm \ov{u}$ during an initial time interval. This motion is over the {\em initial circle} $C_\pm^{\text{in}}$, which is the respective path from the initial state given in \eqref{1}.
The last assumption requires that the encircled closed discs $D_\pm^{\text{in}}$ (also called {\it initial}) lie in the operational zone \eqref{m}, and the maximal turning rate of the robot exceeds the average angular velocity of the field gradient, at least on some initial time interval.
\begin{assumption}
\label{ass.disk}
  There exists a natural $k$ such that
  during the time interval $\left[ 0,T_k\right], T_k:=\frac{2 \pi k}{\ov{u}} $ (a) the gradient $\nabla D(t,\bldr_{\text{in}}), \bldr_{\text{in}}:= (x_{\text{in}}, y_{\text{in}})^\trs$ rotates through an angle that does not exceed $2 \pi (k-1)$ and (b) the both initial discs lie in the domain \eqref{m}, i.e., $[0,T_k] \times D_\pm \subset \mathcal{M}$.
\end{assumption}
\begin{theorem}
\label{th.main}
 Let Assumptions~{\rm \ref{ass.upr}}---{\rm \ref{ass.disk}} hold and
 the robot be driven by the navigation law \eqref{c.a} whose
 parameters $\gamma$ and $\mu=\gamma\delta$ satisfy the following inequalities, where $\mu_\ast:= b_\rho \mu$ and $\sigma(\mu_\ast) := \sqrt{\Delta_\lambda^2 - 2 v \mu_\ast - \mu^2_\ast}$:
\begin{equation}
\label{limit}
0< \mu_\ast  <  \sqrt{v^2+\Delta_\lambda^2}-v, \qquad \Delta_u
> \left(3 b_\tau + \frac{b_\varkappa+2b_v + \gamma +b_n \mu_\ast}{\sigma(\mu_\ast)} + \frac{b_\alpha}{\sigma(\mu_\ast)^3}\right) \mu_\ast  .
\end{equation}
Then the robot achieves the control objective $d(t)\xrightarrow{t \to \infty} d_{0}$, always remaining in the domain \eqref{m}.
\end{theorem}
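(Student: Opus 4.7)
\bigskip\noindent\textbf{Proof plan.}
I would treat the law \eqref{c.a} as a sliding-mode controller with switching function $s(t) := \dot{d}(t) + \chi[d(t)-d_0]$, so that $u=-\sgn\{s\}\,\overline{u}$ and the ideal dynamics on the manifold $\{s=0\}$ is the stable scalar ODE $\dot{d} = -\chi(d-d_0)$. The proof will split into three stages: \textbf{(I)} reach $\{s=0\}$ in finite time $t_1$; \textbf{(II)} verify the sliding-mode reaching condition $s\dot{s}<0$ in a neighbourhood of $\{s=0\}$, so that $s\equiv 0$ on $[t_1,\infty)$; \textbf{(III)} conclude $d(t)\to d_0$ on the sliding manifold together with invariance of $\mathcal{M}$.

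\textbf{Stage (I).} While $s$ has not yet vanished, $u\equiv\pm\overline{u}$ is constant and the robot travels on one of the initial circles $C^{\text{in}}_\pm$, which lies in $\mathcal{M}$ by Assumption~\ref{ass.disk}(b). From $\nabla D=\rho\bldn$ and \eqref{speed1} one obtains
$$ s \;=\; \rho\bigl(v\langle\blde,\bldn\rangle-\lambda\bigr)+\chi(d-d_0). $$
Over $[0,T_k]$ the heading $\blde$ rotates exactly $k$ full times while, by Assumption~\ref{ass.disk}(a), the direction of $\nabla D$ at the recurrent point $\bldr_{\text{in}}$ rotates by at most $2\pi(k-1)$. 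An angular continuity argument, using the $k+1$ returns of the robot to $\bldr_{\text{in}}$ together with the uniform estimates \eqref{estim}, will force the dominant bracket $v\langle\blde,\bldn\rangle-\lambda$ to change sign, yielding $s(t_1)=0$ for some $t_1\le T_k$.

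\textbf{Stage (II).} This is where the main technical work will lie. Differentiating $s$ along \eqref{1} and using $\dot{\blde}=u\blde^\perp$ gives
$$ \dot{s} \;=\; v\rho\,u\,\langle\bldn,\blde^\perp\rangle + R(t,\bldr,\blde), $$
where $R$ collects every term independent of $u$. Near the manifold $\dot{d}=-\chi(d-d_0)$, so I would invoke Proposition~\ref{lem.23} with the surrogate level $d_\ast(\cdot)$ defined by $\dot{d}_\ast=-\chi[d-d_0]$: the velocity then takes the form \eqref{vec.c} with $\lambda$ replaced by $\lambda_\ast:=\lambda-\chi(d-d_0)/\rho$, and the first inequality in \eqref{limit} is precisely the requirement $v^2-\lambda_\ast^2\ge\sigma(\mu_\ast)^2$. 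Consequently the effective tangential speed satisfies $v_T\ge\sigma(\mu_\ast)$ and $|\langle\bldn,\blde^\perp\rangle|=v_T/v$ is uniformly bounded away from zero. I would then expand $R$ in the Frenet frame via Lemma~\ref{lem.relation}, which rewrites $R/(v\rho)$ as the geometric quantity $\pm 2\omega+\alpha/v_T+\varkappa v_T$ from Proposition~\ref{lem.23}, plus a correction generated by the non-vanishing $\chi$-terms that the estimates \eqref{estim} will bound by
$$ \Bigl(3 b_\tau+\frac{b_\varkappa+2b_v+\gamma+b_n\mu_\ast}{\sigma(\mu_\ast)}+\frac{b_\alpha}{\sigma(\mu_\ast)^3}\Bigr)\mu_\ast. $$
Comparing with \eqref{deltalambda} and the second inequality in \eqref{limit} leaves a strictly positive gap, so $|R|/(v\rho)<\overline{u}\,|\langle\bldn,\blde^\perp\rangle|$; standard Filippov/sliding-mode arguments then pin $s$ to zero on $[t_1,\infty)$.

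\textbf{Stage (III).} Once $s\equiv 0$, $d$ obeys the stable ODE $\dot{d}=-\chi(d-d_0)$, whose unique equilibrium $d_0$ is globally attracting because $\chi$ is continuous, odd, and vanishes only at $0$; hence $d(t)\to d_0$. Since $d(t)$ is then squeezed between $d(t_1)$ and $d_0$, both of which lie in $[d_-,d_+]$, the robot never leaves $\mathcal{M}$. The genuinely hard step will be Stage~(II): the Frenet-frame decomposition of $R$ has to be executed meticulously enough that each residual coefficient is absorbed by exactly one of the terms appearing in \eqref{limit}, which is precisely why Lemma~\ref{lem.relation} and Proposition~\ref{lem.23} have been given such specific formulations.
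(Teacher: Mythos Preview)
The document you are working from does not, in fact, contain a proof of Theorem~\ref{th.main}.  As the authors say in the introduction, this text supplies only ``the proofs of the technical facts underlying justification'' of the algorithm; Theorem~\ref{th.main} (and even Proposition~\ref{lem.23}) is merely stated, and the supporting apparatus consists of Lemma~\ref{lem.relation} together with the differentiation lemma of Section~6.  So there is no ``paper's own proof'' against which your proposal can be compared.

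That said, your three--stage sliding--mode plan is precisely the architecture those technical lemmas are designed to feed, and Stage~(III) is unproblematic.  Two places deserve more care before the plan becomes a proof.

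\emph{Stage~(II).}  You isolate the control--dependent term $v\rho\,u\,\langle\bldn,\blde^\perp\rangle$ correctly, but you never discuss the \emph{sign} of $\langle\bldn,\blde^\perp\rangle$.  With $\bldv=\lambda_\ast\bldn\pm v_T\bldt$ one gets $\langle\bldn,\blde^\perp\rangle=\pm v_T/v$, and the reaching inequality $s\dot s<0$ for the specific law $u=-\sgn(s)\,\ov{u}$ holds only for one of the two signs, i.e.\ for one direction of circulation along the surrogate isoline.  Bounding $|R|$ by the right-hand side of \eqref{limit} gives only that the Filippov field on the two sides of $\{s=0\}$ point in opposite normal directions; it does not by itself prevent a \emph{transversal} crossing when the orientation is wrong.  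You therefore need an additional argument---typically using Assumption~\ref{ass.disk} and the formulas \eqref{lrho}--\eqref{n=omega}---showing either that the first arrival at $\{s=0\}$ necessarily occurs with the correct orientation, or that a wrong-sign crossing is followed in finite time by a correct one.

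\emph{Stage~(I).}  At the $k{+}1$ return instants to $\bldr_{\text{in}}$ the heading $\blde$ is the \emph{same} each time, so Assumption~\ref{ass.disk}(a), which bounds only the rotation of $\nabla D(t,\bldr_{\text{in}})$, cannot by itself force $\langle\blde,\bldn\rangle$ to change sign at those instants.  The argument that actually works compares the total rotation of $\blde$ over $[0,T_k]$ (equal to $2\pi k$) with the total rotation of $\bldn(t,\bldr(t))$ \emph{along the moving trajectory}; the latter must be bounded by combining (a) with the spatial variation of $\bldn$ across the initial disc, and this is where part~(b) of Assumption~\ref{ass.disk} and the estimates \eqref{estim} enter.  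Your sketch gestures at this but conflates the fixed-point and moving-point rotations.
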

\par
By sacrificing conservatism, the requirements \eqref{limit} to the controller parameters can be
transformed into an explicit form, which decrypts how small they may be. For example,
let us pick  $\zeta \in (0,1)$ and confine the choice of $(\mu_\ast,\gamma)$
to the ``semi-strip'' $0< \mu_\ast \leq  \sqrt{v^2+\zeta \Delta_\lambda^2}-v, \gamma >0$ to ensure the first two inequalities in \eqref{limit}. Then $\sigma(\mu_\ast) \geq \Delta_\lambda \sqrt{1-\zeta}$ and so \eqref{limit} holds whenever
$$
\left(3 b_\tau + \frac{b_\varkappa+2b_v + \gamma +b_n \mu_\ast}{\Delta_\lambda \sqrt{1-\zeta}} + \frac{b_\alpha}{\Delta_\lambda^3 (1-\zeta)^{3/2}}\right) \mu_\ast < \Delta_u.
$$
So recommended parameters lie in the portion of the above semi-strip that is cut off by the upper piece of the hyperbola explicitly described by the last relation provided that $<$ is replaced by $=$ in it.
\begin{remark}
\label{rem.notevr}
\rm
The last claim of Theorem~\ref{th.main} implies that the behavior of the field outside the zone \eqref{m} does not matter. Moreover, Theorem~\ref{th.main} remains true even if the field is defined not everywhere outside \eqref{m}, which is of interest for some theoretical fields like $c \|\blr\|^{-1}$.
\end{remark}
\par
The fact that \eqref{limit} holds for all small enough $\delta$ provides a guideline for experimental tuning of the controller.
To analytically tune it, estimates of the field parameters concerned in \eqref{limit} should be known. Various types of knowledge about the field give rise to various specifications of \eqref{limit}. The next section offers relevant samples.

\section{Some Particular Scenarios}
\label{sec.exampl}
\setcounter{equation}{0}
\subsection{Dynamic Radial Field with Time-Invariant Profile}
\label{sub.radial}
Let it be known that the field is radial $D(t,\bldr) = c f(\|\bldr-\bldr^0(t)\|)$ with time-invariant profile $f: (0,\infty) \to \br$.
Then spatial isolines are circles with radii determined by the associated field levels $d_\ast$. This opens the door for analytical computation of many field parameters from Section~\ref{sec.quant} and underlies transformation of \eqref{limit} to a more explicit form. This may be used to acquire fully analytical conditions of isoline tracking for various special field profiles, which will be illustrated in the next two subsections.
\par
Let the profile $f$ be twice continuously differentiable and decay $f^\prime(z) <0$; we initially assume that $f$ is known. Conversely both field ``intensity'' $c>0$ and its moving center $\bldr^0(\cdot)$ are unknown but obey some known bounds:
 \begin{equation}
\label{known}
0< R_{\text{in}}^- \leq  \left\|\bldr_{\text{in}} - \bldr^0 (0)\right\| \leq R_{\text{in}}^+, \quad \|\dot{\bldr}^0(t)\| \leq v_0, \quad \|\ddot{\bldr}^0(t)\| \leq a_0, \quad  0 < c_- \leq c \leq c_+ .
\end{equation}
The objective is to advance to the field isoline $I$ with the given level $d_0$ and to subsequently track $I$.
\par
\par
The control objective is well posed in the face of uncertainties only if the desired field value $d_0$ lies in the field range for any $c \in [c_-,c_+]$, or equivalently
$ d_0/c_\pm \in \text{Im}\, f$.
The following specification of Proposition~\ref{lem.23} shows that irrespective of the control law, the robot should be maneuverable enough to achieve the control objective, and details the required level of maneuvearbility.
\begin{proposition}
\label{prop.radial}
Suppose that the robot is able to remain on the dynamic isoline $D[t,\bldr(t)] \equiv d_0$ if the field uncertainties satisfy \eqref{known}. Then the robot's speed is no less than the maximal feasible speed of the field center $v \geq v_0$, and the following inequality holds for any $v_n \in [0,v_0]$, where $R_-:= \min\{ f^{-1}(d_0/c_+); f^{-1}(d_0/c_-)\}$ and $f^{-1}(\cdot)$ is the inverse function,
\begin{equation}
\label{nec.radial}
 \frac{a_0 }{\sqrt{v^2-v_n^2}}
   + \frac{\left(\sqrt{v^2- v_n^2} + \sqrt{v_0^2-v_n^2}\right)^2}{R_-\sqrt{v^2-v_n^2}}  \leq \ov{u}.
\end{equation}
\end{proposition}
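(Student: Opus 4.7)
The plan is to specialize the generic tracking constraints of Proposition~\ref{lem.23} to the moving radial field by computing the field quantities $\lambda, \kappa, \omega, \alpha$ in terms of the center's kinematics $\dot{\bldr}^0, \ddot{\bldr}^0$, and then optimizing over the uncertainty set \eqref{known}. Since the isoline at time $t$ is the circle of radius $R = f^{-1}(d_0/c)$ centered at $\bldr^0(t)$, one reads off $\kappa = 1/R$ immediately. To obtain $\lambda, \omega, \alpha$, I would work in a local Frenet frame with $\bldr^0(t) = 0$, $\bldr = (R, 0)$, $\bldn = (-1, 0)$, $\bldt = (0, 1)$ (the inward orientation of $\bldn$ reflects $f' < 0$), and propagate the isoline infinitesimally to time $t + dt$ about the shifted center $\dot{\bldr}^0\, dt$. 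Reading off the normal speed, the rotation of $\bldn$, and the derivative of $\lambda$ transported along the isoline's flow yields
\[ \lambda = \langle \bldn, \dot{\bldr}^0\rangle, \qquad \omega = -\frac{\langle \bldt, \dot{\bldr}^0\rangle}{R}, \qquad \alpha = \langle \bldn, \ddot{\bldr}^0\rangle + \frac{\langle \bldt, \dot{\bldr}^0\rangle^2}{R}. \]
These can be cross-checked by substituting the explicit $\nabla D'_t, D''$ for $D = c\, f(\|\bldr - \bldr^0\|)$ into Lemma~\ref{lem.relation}; the radial-with-fixed-profile structure also forces $v_\rho \equiv 0$, which is why such a compact closed form for $\alpha$ drops out.

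The bound $v \geq v_0$ follows from $|\lambda|\leq v$ in \eqref{speed} applied at an instant where $\dot{\bldr}^0$ is aligned with $\bldn$ at magnitude $v_0$ (so $\lambda = v_0$). For the main inequality, I would set $v_n := \lambda$, $v_t := \langle \bldt, \dot{\bldr}^0\rangle$, $a_n := \langle \bldn, \ddot{\bldr}^0\rangle$ with $v_n^2 + v_t^2 \leq v_0^2$ and $|a_n| \leq a_0$, substitute into \eqref{accel} with the sign $\varepsilon \in \{+1, -1\}$ from \eqref{vec.c}, and multiply by $v_T := \sqrt{v^2 - v_n^2}$. Using the identity $v_t^2 - 2\varepsilon v_t v_T + v_T^2 = (\varepsilon v_t - v_T)^2$, the tracking constraint collapses to
\[ \bigl| a_n + (\varepsilon v_t - v_T)^2/R \bigr| \leq \ov{u}\, v_T. \]
For each fixed $v_n \in [0, v_0]$, the adversarial choices $a_n = a_0$, $|v_t| = \sqrt{v_0^2 - v_n^2}$ with $\sgn v_t = -\varepsilon$, and $c \in [c_-, c_+]$ minimizing $R$ to $R_-$, then give $a_0 + (v_T + \sqrt{v_0^2 - v_n^2})^2/R_- \leq \ov{u}\, v_T$, which is \eqref{nec.radial} after division by $v_T$.

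The main subtlety is to justify the adversary's freedom to orient the sign of $v_t$ opposite to $\varepsilon$ and thereby saturate $(\varepsilon v_t - v_T)^2 = (v_T + \sqrt{v_0^2 - v_n^2})^2$. Since $v \geq v_0$ is already established and the chosen instant has $v_n < v_0$ (so $|\lambda| < v$), the robot's velocity never becomes purely normal nearby, hence the sign $\varepsilon$ in \eqref{vec.c} is locally constant and pinned by the robot's initial configuration --- which the adversary may set so that either value of $\varepsilon$ occurs. The other nontrivial ingredient is the derivation of $\omega$ and $\alpha$ for the translating radial field itself: both quantities couple $\nabla D'_t$ with the Hessian $D''$, and the centripetal term $\langle \bldt, \dot{\bldr}^0\rangle^2/R$ in $\alpha$ only emerges after careful book-keeping of how the relative position $\bldr - \bldr^0(t)$ rotates as the isoline drifts.
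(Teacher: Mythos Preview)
Your proposal is correct and follows essentially the same route as the paper: compute $\lambda,\varkappa,\omega,\alpha$ for the translating radial field (the paper obtains the identical formulas via Lemma~\ref{lem.relation}), plug into the generic constraints \eqref{speed}--\eqref{accel} of Proposition~\ref{lem.23}, and then optimize over the admissible $\dot{\bldr}^0,\ddot{\bldr}^0,c$. Your perfect-square identity $(\varepsilon v_t-v_T)^2$ is a slightly slicker packaging of the same algebra the paper carries out term by term, and your remark that the adversary may pick $\sgn v_t=-\varepsilon$ (so the sign in \eqref{accel} is immaterial) makes explicit a point the paper leaves implicit.
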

\par
{\bf Proof:} An elementary calculus exercise, along with Lemma~\ref{lem.relation}, shows that
\begin{multline}
\bldn = -\frac{\bldr - \bldr^0}{\|\bldr - \bldr^0\|}, \quad \bldt = - \mathscr{R}_{\frac{\pi}{2}} \bldn , \quad
\lambda = \spr{\dot{\bldr}^0}{N}, \\ \rho = c |f^\prime|,  \quad
 \varkappa = \frac{1}{\|\bldr-\bldr^0\|}, \quad \tau_\rho =0, \quad n_\rho = \frac{f^\pp}{|f^\prime|},
\quad
v_\rho = 0,
\\
\label{n}
 \omega =  - \frac{\spr{\dot{\bldr}^0}{\bldt}}{\|\bldr-\bldr^0\|},
    \quad \alpha = \spr{\ddot{\bldr}^0}{\bldn}  + \frac{\spr{\dot{\bldr}^0}{\bldt}^2}{\|\bldr - \bldr^0\|}.
\end{multline}
By Proposition~\ref{lem.23}, \eqref{speed} and \eqref{accel} hold.
Putting $v_n:= \spr{\dot{\bldr}^0}{\bldn}, R := \|\bldr-\bldr^0\|$ and noting that $\spr{\dot{\bldr}^0}{T} = \pm \sqrt{\|\dot{\bldr}^0\|^2-v_n^2}$, we see that now \eqref{speed} $\Leftrightarrow |v_n| \leq v$ and \eqref{accel} takes the form
\begin{multline*}
- \overline{u} \leq
  - 2 \frac{\sqrt{\|\dot{\bldr}^0\|^2-v_n^2}}{R}  + \frac{\spr{\ddot{\bldr}^0}{\bldn}  + \frac{\|\dot{\bldr}^0\|^2-v_n^2}{R}}{\sqrt{v^2-v_n^2}} + \frac{1}{R} \sqrt{v^2-v_n^2} ,
 \\
  2 \frac{\sqrt{\|\dot{\bldr}^0\|^2-v_n^2}}{R}  + \frac{\spr{\ddot{\bldr}^0}{\bldn}  + \frac{\|\dot{\bldr}^0\|^2-v_n^2}{R}}{\sqrt{v^2-v_n^2}} + \frac{1}{R} \sqrt{v^2-v_n^2} \leq \overline{u}.
\end{multline*}
This holds for any $\dot{\bldr}^0, \ddot{\bldr}^0$ satisfying the bounds from \eqref{known}. Minimizing and maximizing over feasible $\ddot{\bldr}^0$ shows that
\begin{multline*}
- \overline{u} \leq
  - 2 \frac{\sqrt{\|\dot{\bldr}^0\|^2-v_n^2}}{R}  + \frac{-a_0  + \frac{\|\dot{\bldr}^0\|^2-v_n^2}{R}}{\sqrt{v^2-v_n^2}} + \frac{1}{R} \sqrt{v^2-v_n^2} ,
 \\
  2 \frac{\sqrt{\|\dot{\bldr}^0\|^2-v_n^2}}{R}  + \frac{a_0 + \frac{\|\dot{\bldr}^0\|^2-v_n^2}{R}}{\sqrt{v^2-v_n^2}} + \frac{1}{R} \sqrt{v^2-v_n^2} \leq \overline{u}
\end{multline*}
Putting $\dot{\bldr}^0 := v_0 \bldn$ into $|v_n| \leq v$ assures that $v_0 \leq v$. Minimizing and maximizing over feasible $\dot{\bldr}^0$ yields that
$$
  2 \frac{\sqrt{v_0^2-v_n^2}}{R}  + \frac{a_0  }{\sqrt{v^2-v_n^2}}
   \pm \frac{v^2+v_0^2-2v_n^2}{R\sqrt{v^2-v_n^2}}\leq \overline{u}.
$$
Since the expression following $\pm$ is nonnegative, the sign $-$ can be dropped in $\pm$.
It remains to note that as $c$ ranges over the feasible interval from \eqref{known}, the parameter $R$ runs over the interval with the end-points $f^{-1}(d_0/c_-)$ and $f^{-1}(d_0/c_+)$ due to the isoline equation $d_0 = cf(R)$. \epf
\par
The following corollary of Theorem~\ref{th.main} shows
that slight enhancement of the above necessary conditions is enough for the controller \eqref{c.a} to succeed.
Specifically, we in fact sharpen inequality \eqref{nec.radial} from $\leq$ to $<$ and extend it from the isoline on the transient by reducing $R_-$ in \eqref{nec.radial}.
\begin{proposition}
\label{prop.radial1}
Let the robot be faster than the field center $v>v_0$ and initially far enough from it:
\begin{equation}
\label{far.init}
R_{\text{in}}^- > (2v+4 \pi v_0)/\ov{u}.
\end{equation}
Suppose also that for some $\Delta_u >0$, \eqref{nec.radial} holds for all $v_n \in [0,v_0]$ with $\ov{u}$ replaced by $\ov{u}-\Delta_u$
and $R_-$ given by
\begin{gather*}
R_\pm:= \pm  \max\left\{ \pm R^\prime_\pm; \pm f^{-1}\left( d_0/c_-\right); \pm f^{-1}\left( d_0/c_+\right)\right\},
\\
R^\prime_\pm := R_{\text{in}}^\pm \pm (2v+4 \pi v_0)/\ov{u}.
\end{gather*}
Then there exist parameters $\gamma, \delta$ such that whenever \eqref{known} holds, the controller \eqref{c.a} brings the robot to the desired isoline: $d(t) \to d_0$ as $t \to \infty$.
Specifically, this is true if
\begin{equation}
\label{radial.limt}
0< \mu_\ast := \frac{\gamma \delta}{c_- \displaystyle{\min_{R \in [R_-,R_+]}} |f^\prime(R)|}  <  \sqrt{v^2+(v-v_0)^2}-v, \qquad
\left(\frac{\frac{1}{R_-} + \gamma +b_n \mu_\ast}{\sigma(\mu_\ast)} + \frac{a_0 + \frac{v_0^2}{R_-}}{\sigma(\mu_\ast)^3}\right) \mu_\ast < \Delta_u,
\end{equation}
where
$$
\sigma(\mu_\ast) := \sqrt{(v-v_0)^2 - 2 v \mu_\ast - \mu^2_\ast}, \qquad b_n = \displaystyle{\max_{R \in [R_-,R_+]}} \frac{|f^\pp(R)|}{|f^\prime(R)|}.
$$
\end{proposition}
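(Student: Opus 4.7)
The plan is to deduce the proposition from Theorem~\ref{th.main} by choosing an operational zone, translating all quantities of Section~\ref{sec.quant} into the radial setting via formulas~\eqref{n}, verifying Assumptions~\ref{ass.upr}--\ref{ass.disk} with explicit constants, and finally substituting those constants into \eqref{limit} to recover \eqref{radial.limt}. The natural operational zone is the moving annulus $\mathcal{M}:=\{(t,\bldr):R_-\leq\|\bldr-\bldr^0(t)\|\leq R_+\}$, which corresponds to a range $[d_-,d_+]$ of field values by strict monotonicity of $f$; the definition of $R_\pm$ ensures that the target isoline $D=d_0$ lies in $\mathcal{M}$ for every admissible $c\in[c_-,c_+]$.

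Using \eqref{n}, each field characteristic admits an explicit bound on $\mathcal{M}$. In particular $|\lambda|\leq v_0$ gives \eqref{deltalambda1} with $\Delta_\lambda:=v-v_0>0$; the bound $\rho=c|f'|\geq c_-\min_{R\in[R_-,R_+]}|f'(R)|$ delivers the $b_\rho^{-1}$ required in Assumption~\ref{ass.grad}, and this $b_\rho$ is exactly the denominator hidden in the definition of $\mu_\ast$ in \eqref{radial.limt}. The identities $\tau_\rho\equiv0$ and $v_\rho\equiv0$ kill the corresponding terms in \eqref{limit}; the quantities $b_\varkappa=1/R_-$, $b_n=\max_{[R_-,R_+]}|f''|/|f'|$, and $b_\alpha=a_0+v_0^2/R_-$ are read off directly from \eqref{n}. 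Inequality \eqref{deltalambda} of Assumption~\ref{ass.upr} follows because the worst-case manipulations already carried out in the proof of Proposition~\ref{prop.radial} yield precisely the left-hand side of \eqref{nec.radial}; applying those manipulations pointwise on $\mathcal{M}$ (with $R$ bounded below by $R_-$) and invoking the strict version of \eqref{nec.radial} in which $\ov{u}$ is replaced by $\ov{u}-\Delta_u$ closes the argument.

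To verify Assumption~\ref{ass.disk} I would choose a suitable natural $k$ (for instance $k=2$, so $T_k=4\pi/\ov{u}$). Any point of the initial discs $D_\pm^{\text{in}}$ is at most $2v/\ov{u}$ from $\bldr_{\text{in}}$, and the centre is displaced by at most $v_0 T_k=4\pi v_0/\ov{u}$ on $[0,T_k]$; hypothesis~\eqref{far.init} together with the triangle inequality places both initial discs inside $\mathcal{M}$ throughout $[0,T_k]$, handling part~(b). Part~(a) follows from observing that $\nabla D(t,\bldr_{\text{in}})$ is collinear with $\bldr^0(t)-\bldr_{\text{in}}$, and its angular speed is at most $v_0/\|\bldr^0(t)-\bldr_{\text{in}}\|\leq v_0/R_-^\prime$; thus the total rotation on $[0,T_k]$ is bounded by $v_0 T_k/R_-^\prime$, which \eqref{far.init} renders $\leq 2\pi(k-1)$ for the selected $k$.

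Substituting $\Delta_\lambda=v-v_0$, $\mu_\ast=\gamma\delta/[c_-\min_{[R_-,R_+]}|f'|]$, $b_\tau=b_v=0$, and the values of $b_\varkappa,b_n,b_\alpha$ derived above into \eqref{limit} produces \eqref{radial.limt} verbatim, and Theorem~\ref{th.main} delivers the conclusion $d(t)\to d_0$. The main obstacle I expect is the bookkeeping in the second step: one must simultaneously worst-case over the unknown intensity $c\in[c_-,c_+]$ and over all admissible centre trajectories $\bldr^0(\cdot)$, and one must propagate the estimates from the target isoline, where Proposition~\ref{prop.radial} naturally lives, to the \emph{full} annulus $\mathcal{M}$ by replacing the instantaneous radius $R$ with the transient-absorbing lower bound $R_-$ arising from the definition of $R_\pm^\prime$, so that the resulting pointwise bounds are tight enough to trigger Theorem~\ref{th.main} throughout the entire trajectory and not merely on the steady-state loop.
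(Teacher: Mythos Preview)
Your plan is correct and matches the paper's proof almost step for step: the same annular operational zone, the same invocation of \eqref{n} to read off $\Delta_\lambda=v-v_0$, $b_\tau=b_v=0$, $b_\varkappa=1/R_-$, $b_n$, $b_\alpha=a_0+v_0^2/R_-$, the same reduction of \eqref{deltalambda} to the strengthened \eqref{nec.radial} via the worst-case manipulations of Proposition~\ref{prop.radial}, the same choice $k=2$ in Assumption~\ref{ass.disk}, and the same substitution into \eqref{limit}.

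There is one small slip in your treatment of part~(a) of Assumption~\ref{ass.disk}. Your angular-speed bound $v_0/\|\bldr^0(t)-\bldr_{\text{in}}\|\leq v_0/R'_-$ is too loose: integrating it over $[0,T_2]$ gives $4\pi v_0/(\ov u\,R'_-)$, and \eqref{far.init} only guarantees $R'_->0$, not $R'_-\geq 2v_0/\ov u$, so you cannot conclude the rotation is $\leq 2\pi$. The paper avoids this by a direct geometric observation: on $[0,T_2]$ the centre $\bldr^0(t)$ remains in the disc of radius $4\pi v_0/\ov u$ about $\bldr^0(0)$, which by \eqref{far.init} does not contain $\bldr_{\text{in}}$; since $\nabla D(t,\bldr_{\text{in}})$ is collinear with $\bldr_{\text{in}}-\bldr^0(t)$, the gradient direction sweeps an angle of at most $\pi$. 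Alternatively, your integration argument works if you use the sharper lower bound $\|\bldr^0(t)-\bldr_{\text{in}}\|\geq R_{\text{in}}^- - 4\pi v_0/\ov u$ (the $2v/\ov u$ in $R'_-$ is needed only for the robot's discs, not for the centre), since then \eqref{far.init} and $v>v_0$ give $R_{\text{in}}^- - 4\pi v_0/\ov u > 2v/\ov u > 2v_0/\ov u$.
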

Inequalities \eqref{radial.limt} are feasible and can be satisfied by taking small enough $\mu:=\gamma \delta$.
Full knowledge of the field profile $f(\cdot)$ is not in fact required: it suffices to know estimates $R_-^0 \leq R \leq R_+^0$ of the radius $R$ of the desired isoline and replace $R_-$ and $R_+$ by their lower $R_-:= \min\{R_-^0; R^\prime_-\}$ and upper $R_+:= \max\{R_+^0; R^\prime_+\}$ bounds, respectively. Furthermore, it suffices to know a lower $\min_{R \in [R_-,R_+]} |f^\prime(R)| \geq b_- >0$ and upper $\max_{R \in [R_-,R_+]} |f^\pp(R)| \leq b_+$ estimates and take $\mu_\ast = \mu c_-^{-1} b_-^{-1}$ and $b_n := b_+/b_-$ in Proposition~\ref{prop.radial1}.
\par
{\bf Proof of Proposition~\ref{prop.radial1}:} Let $d^\prime_\pm := c f(R^\prime_\mp), d_- := \min \{d^\prime_-; d_0\}, d_+ := \max \{d^\prime_+; d_0\}$. Then due to \eqref{m},
\begin{equation}
\label{mmm}
\mathcal{M}
\subset
\{(t,\bldr): R_- \leq \| \bldr - \bldr^0(t)\|  \leq R_+ \}.
\end{equation}
 Thanks to \eqref{n}, Assumptions~\ref{ass.grad} and \ref{ass.last} are fulfilled with $b_\rho=c_-^{-1}\big\{\min_{R \in [R_-,R_+]} |f^\prime(R)| \big\}^{-1}$.
As $t$ ranges over $[0,T_2], T_2= 4 \pi/\ov{u}$, the field center $\bldr^0(t)$ remains in the disc $D_0$ of the radius $4 \pi v_0/\ov{u}$ centered at $\bldr^0(0)$. This disk does not contain $\bldr_{\text{in}}$ due to \eqref{far.init} and the first inequality in \eqref{known}. Meanwhile, the gradient $\nabla D(t,\bldr_{\text{in}})$ rotates through an angle that does not exceed $\pi$ thanks to the first equation in \eqref{n}, and so the first requirement of Assumption~\ref{ass.disk} is satisfied with $k=2$. Since $D_\pm \subset \{\bldr: \|\bldr - \bldr_{\text{in}}\| \leq 2 v/\ov{u}\}$, for $t \in [0,T_2], \bldr \in D_\pm$, we have owing to \eqref{known},
\begin{equation*}
\|\bldr - \bldr^0(t)\| \geq \|\bldr_{\text{in}} - \bldr^0(0)\| - \|\bldr-\bldr_{\text{in}}\| - \|\bldr^0(t) - \bldr^0(0)\|
\geq R_{\text{in}}^- -
2 v/\ov{u} - 4 \pi v_0/\ov{u} = R^\prime_-,
\end{equation*}
and similarly $\|\bldr - \bldr^0(t)\| \leq R^\prime_+$. So the second claim of Assumption~\ref{ass.disk} with $k=2$ holds due to \eqref{mmm}.
\par
Thanks to \eqref{n}, inequality \eqref{deltalambda1} is true with $\Delta_\lambda := v-v_0$, whereas \eqref{deltalambda} means that $\overline{u} - \Delta_u$ is no less than
\begin{equation*}
\left| \pm 2 \frac{\spr{\dot{\bldr}^0}{\bldt}}{\|\bldr-\bldr^0\|}  + \frac{\spr{\ddot{\bldr}^0}{\bldn}  + \frac{\spr{\dot{\bldr}^0}{\bldt}^2}{\|\bldr - \bldr^0\|}}{\sqrt{v^2-\spr{\dot{\bldr}^0}{\bldn}^2}} +\frac{1}{\|\bldr-\bldr^0\|} \sqrt{v^2-\spr{\dot{\bldr}^0}{\bldn}^2}\right|
\end{equation*}
Retracing the arguments from the proof of Proposition~\ref{prop.radial} shows that for $(t,\bldr) \in \mathcal{M}$, the last expression does not exceed that in \eqref{nec.radial}. Hence \eqref{deltalambda} holds and Assumption~\ref{ass.upr} is fulfilled.
\par
Because of \eqref{n} and \eqref{mmm}, now the estimates \eqref{estim} hold with
\begin{equation*}
b_\lambda = v_0, \quad b_\tau =0, \quad b_n = \max_{R \in [R_-,R_+]} \frac{|f^\pp(R)|}{|f^\prime(R)|}, \quad b_\varkappa = \frac{1}{R_-}, \quad
b_v = 0, \quad b_\alpha = a_0 + \frac{v_0^2}{R_-}.
\end{equation*}
It remains to note that this transforms
\eqref{limit} into \eqref{radial.limt} and apply Theorem~\ref{th.main}. \epf
\par
The following remark shows that by sacrificing conservatism, the key condition \eqref{nec.radial} can be simplified.
\begin{remark}
\label{rem.cfinal}
Inequality
\eqref{nec.radial} is true whenever
$$
\frac{a_0}{\sqrt{v^2-v_0^2}} + \frac{(v+v_0)^2}{vR_-} \leq \ov{u}.
$$
\end{remark}
This holds since the addends in the left-hand side are upper bounds for the respective addends in \eqref{nec.radial}.
\par
 Now we discuss more particular scenarios, where the tracking conditions can be further specified.
 \begin{figure}[h]
\centering
\scalebox{0.3}{\includegraphics{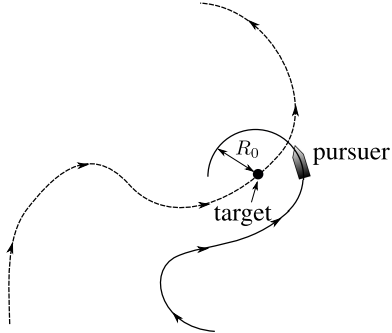}}
\caption{Escorting a maneuvering target.}
\label{fig.pursuit}
\end{figure}

\subsection{Approaching and escorting an unknowingly maneuvering target based on range-only measurements}
\label{subsect.target}
A pursuer robot should approach an unknowingly maneuvering target $\bldr^0(t)$
using only measurements of the relative distance
between them and should subsequently follow the target at the pre-specified range $R_0$, while always maintaining the constant speed $v$; see Fig.~\ref{fig.pursuit}. The bounds from \eqref{known} are still valid; nothing else is known about the target.
\par
This scenario falls under the framework of subsect.~\ref{sub.radial}:
$D[t,\bldr] = - \|\bldr - \bldr^0(t)\|, c=c_\pm =1, f(z)=-z$, and $d_0=-R_0$.
By Proposition~\ref{prop.radial}, the pursuer is capable of carrying out the mission only if its speed is no less than that of the target $v \geq v_0$ and \eqref{nec.radial} holds with $R_-:= R_0$.
Let these minimum requirements be fulfilled in the enhanced form given by Proposition~\ref{prop.radial1}, where $R_-:= \min\{ R_0, R_{\text{in}}^- - (2v+4 \pi v_0)/\ov{u} \}$, and let \eqref{far.init} be true.
By Pro\-po\-si\-tion~\ref{prop.radial1}, the stated objective is ensured by the controller \eqref{c.a} if its parameters satisfy \eqref{radial.limt}, where $\mu_\ast = \gamma \delta$ and $b_n=0$.
\par
In \cite{MTS11ronly}, such problem was solved in the case where more information about the target is available: the target is a kinematically constrained Dubins-like car \eqref{1} with a known range $\ov{u}$ of angular velocities that moves at a constant speed obeying a known upper bound.

\subsection{Detection and tracking of the boundary of a contaminated area transported by  advection}
 \begin{figure}[h]
\centering
\scalebox{0.35}{\includegraphics{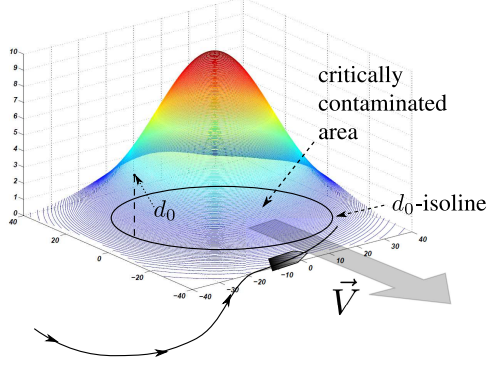}}
\caption{A scalar field transported by advection.}
\label{fig.advection}
\end{figure}
A planar liquid environment is chemically contaminated; see Fig.~\ref{fig.advection}. The concentration $D$ of the pollutant varies over time and plane,
the critically contaminated area $A_{\text{cc}}$ is that where $D$ exceeds a certain level $d_0>0$. A robot should advance to the boundary of $A_{\text{cc}}$ and to subsequently track it, thus displaying $A_{\text{cc}}$. Only the concentration at the robot's current location can be measured.
\par
The initial distribution of the concentration is Gaussian
$
D(0,\bldr) = c e^{-\frac{\|\bldr - \bldr_0\|^2}{2 \sigma^2}}.
$
Its center $\bldr_0$, variation $\sigma^2$, and maximum value $c$ are uncertain, but obey known bounds: $0< \sigma_- \leq \sigma \leq \sigma_+, c \geq c_- > d_0, 0< R_{\text{in}}^- \leq  \left\|\bldr_{\text{in}} - \bldr_0 \right\| \leq R_{\text{in}}^+$.
The liquid moves with a constant velocity $\pmb{V}$ obeying a known bound $\|\pmb{V}\| \leq V_\ast$. Dispersion of the pollutant is negligible and its transport is basically by advection.
\par
Since $D^\prime_t = - \spr{\pmb{V}}{\nabla D}$ \cite{LaLi59} and so $D(t,\bldr) = D(0,\bldr - \pmb{V}t)$,
this scenario falls into the framework of subsect.~\ref{sub.radial} with
$\bldr^0(t) := \bldr_0 + \pmb{V}t, f(z)= e^{-z^2/(2 \sigma^2)}, a_0=0, v_0=V_\ast$.
By Proposition~\ref{prop.radial}, the robot is capable of carrying out the mission only if it is faster than the flow $v \geq V_\ast$ and \eqref{nec.radial} holds with $a_0=0, R_-:= \sigma_-\sqrt{2(\ln c_- - \ln d_0)}$.
Since the first addend in \eqref{nec.radial} is now zero,
\eqref{nec.radial} shapes into $\frac{(v+V_\ast)^2}{R_-} \leq \ov{u} v$ by Remark~\ref{rem.cfinal}.
Here $\ov{u}v$ is the maximal acceleration feasible for the robot \eqref{1}, whereas $\frac{(v+V_\ast)^2}{R_-}$ is the centripetal acceleration required to remain on the desired isoline in the ``worst case'': the flow is maximal and opposes the robot's velocity, the radius of the isoline is minimal over the range of uncertainty in $c$ and $\sigma$.
\par
Let the necessary conditions hold in the enhanced form $v>V_\ast$ and
$(v+V_\ast)^2R_-^{-1} < v\ov{u}$, where $R_-:= \min\{ \sigma_-\sqrt{2(\ln c_- - \ln d_0)}, R_{\text{in}}^- - (2v+4 \pi V_\ast)/\ov{u} \}$, and let \eqref{far.init} hold with $v_0:= V_\ast$.
Modulo elementary computation of the quantities $R_+$ and $b_n$ from \eqref{radial.limt},
Proposition~\ref{prop.radial1} guarantees that the controller \eqref{c.a} succeeds if its parameters satisfy \eqref{radial.limt} with
\begin{gather*}
v_0:=V_\ast, \quad a_0:=0, \quad \Delta_u:= \ov{u}- \frac{(v+V_\ast)^2}{vR_-},
\\
\mu_\ast = \mu \frac{c_-}{\sigma_+^2} \min_{R=R_-,R_+} R e^{- \frac{R^2}{2 \sigma_-^2}} ,\; \text{where} \; \mu = \delta \gamma \; \text{and}\; R_+
\\
:= \max\left\{ \sigma_-\sqrt{2(\ln c_- - \ln d_0)}, R_{\text{in}}^- + (2v+4 \pi V_\ast)/\ov{u} \right\},
\\
b_n = \max_{\substack{R = R_-,R_+\\\sigma=\sigma_-,\sigma_+}} \frac{|\sigma^2 - R^2|}{\sigma^2R}.
\end{gather*}
(The last two lines are justified by elementary estimates of $\min_{z \in [R_-, R_+]} |f^\prime(z)|$ and $\max_{z \in [R_-, R_+]} \frac{|f^\pp(z)|}{|f^\prime(z)|}$.)
\section{A Technical Fact Underlying the Proofs of Proposition~\ref{lem.23} and Theorem~\ref{th.main}}

  \begin{lemma}
The following relations hold:
\begin{gather}
\label{lrho}
\begin{array}{c}
\lambda (t,\bldr+\bldt ds) = \lambda + \omega ds + \so (ds), \\ \lambda (t,\bldr+\bldn ds) = \lambda - v_\rho ds + \so (ds)
\end{array};
\\
\label{frenet-serrat}
\begin{array}{c}
\bldn[t,\bldr+\bldt ds] = \bldn - \kappa \bldt ds +\so(ds), \\ \bldt[t,\bldr+\bldt ds] = \bldt + \kappa \bldn ds +\so(ds);
\end{array}
\\
\label{nnn}
\begin{array}{c}
\bldn(t,\bldr+\bldn ds)= \bldn + \tau_\rho \bldt ds + \so(ds), \\ \bldt(t,\bldr+\bldn ds)= \bldt - \tau_\rho \bldn ds + \so(ds),
\end{array}
\\
\label{n=omega}
\begin{array}{c}
\bldn[t+dt, \bldr_+(dt)] = \bldn - \omega \bldt dt +\so(dt), \\ \bldt[t+dt, \bldr_+(dt)] = \bldt + \omega \bldn dt +\so(dt).
\end{array}
\end{gather}
\end{lemma}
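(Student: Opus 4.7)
The plan is to obtain all four pairs of identities as first-order Taylor expansions of the appropriate quantities around $(t,\bldr)$, reading off the coefficients via the explicit formulas already established in Lemma~\ref{lem.relation}. I would proceed in three blocks, dealing with the two spatial Frenet pairs first, then the temporal one, and finally the $\lambda$-identities.

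For the spatial Frenet pair \eqref{frenet-serrat}, I would write $\bldn(t,\cdot)=\nabla D(t,\cdot)/\|\nabla D(t,\cdot)\|$ and Taylor-expand
\[
\nabla D(t,\bldr+\bldt\, ds)=\nabla D+D^{\prime\prime}\bldt\, ds+\so(ds),
\]
then normalize exactly as in the display appearing after \eqref{expan} in the proof of Lemma~\ref{lem.relation}. The zeroth-order term is $\bldn$; the first-order correction is the $\bldt$-component $\langle D^{\prime\prime}\bldt,\bldt\rangle\,\bldt/\|\nabla D\|\,ds$, since the $\bldn$-component is killed by normalization. By the first formula in \eqref{tau}, $\langle D^{\prime\prime}\bldt,\bldt\rangle/\|\nabla D\|=-\kappa$, which yields the first line of \eqref{frenet-serrat}. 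The companion identity for $\bldt$ then follows from $\bldt=\mathscr{R}_{-\pi/2}\bldn$ (equivalently, from differentiating the orthogonality relations). The pair \eqref{nnn} is derived identically, but with $\bldt\,ds$ replaced by $\bldn\,ds$ in the displacement; the surviving tangential coefficient is $\langle D^{\prime\prime}\bldn,\bldt\rangle/\|\nabla D\|=\tau_\rho$ by the second formula in \eqref{tau}.

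For the time pair \eqref{n=omega}, the first identity is already produced mid-proof of Lemma~\ref{lem.relation}: the multi-line computation that yields the second formula in \eqref{alpha} passes through exactly the expression $\bldn[t+dt,\bldr_+(dt)]=\bldn-\omega\bldt\, dt+\so(dt)$. I would simply cite that step. The $\bldt$-identity then follows from $\bldt=\mathscr{R}_{-\pi/2}\bldn$, or equivalently from differentiating $\langle\bldt,\bldn\rangle=0$ and $\|\bldt\|=1$ along the curve $\Delta t\mapsto\bldr_+(\Delta t)$.

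For the $\lambda$-identities \eqref{lrho}, the strategy is to start from $\lambda=-D'_t/\|\nabla D\|$ (the first formula in \eqref{speed1}) and Taylor-expand numerator and denominator along $\bldt\,ds$ and along $\bldn\,ds$. After collecting first-order terms, the tangential expansion produces $-\langle\nabla D'_t+\lambda D^{\prime\prime}\bldn,\bldt\rangle/\|\nabla D\|$, which equals $\omega$ by the second formula of \eqref{alpha}; the normal expansion produces $-\langle\nabla D'_t+\lambda D^{\prime\prime}\bldn,\bldn\rangle/\|\nabla D\|$, which equals $-v_\rho$ by the first formula of \eqref{alpha}. The main obstacle, and the only delicate point of the whole argument, is purely book-keeping: one must check that the $\so(ds)$ remainders coming from the separate expansions of $D'_t$, of $\|\nabla D\|$, and of the gradient shift $\lambda(t,\bldr)\to\lambda(t,\bldr+\bldt\,ds)$ induced by the non-constancy of $\bldt,\bldn$ indeed combine into a single $\so(ds)$ after the division, which follows from $\|\nabla D\|\geq b_\rho^{-1}>0$ guaranteed under the standing assumption $\nabla D\neq 0$.
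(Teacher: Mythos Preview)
Your proposal is correct and follows essentially the same route as the paper: the paper likewise Taylor-expands $\lambda=-D'_t/\|\nabla D\|$ along $\bldt\,ds$ and $\bldn\,ds$ and identifies the coefficients via \eqref{alpha}, derives \eqref{nnn} by the same gradient-normalization computation you sketch, obtains \eqref{n=omega} from the very display in the proof of Lemma~\ref{lem.relation} you cite (phrased as ``follows from the definition of $\omega$''), and for \eqref{frenet-serrat} simply invokes the Frenet--Serret equations rather than rederiving them as you do. One minor remark: for the remainder bookkeeping you need only $\nabla D(t,\bldr)\neq 0$ locally, not the global bound $b_\rho^{-1}$ from Assumption~\ref{ass.grad}.
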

\par
{\bf Proof:}
Formulas \eqref{lrho} are justified by the following:
\begin{multline*}
\lambda(t,\bldr+\bldt ds) \overset{\text{\eqref{speed1}}}{=} -\frac{D^\prime_{t}(t,\bldr+\bldt ds)}{\|\nabla D(t, \bldr+ \bldt ds)\|} = \lambda(t,\bldr)  + \so(ds)
 \\
 - \frac{\spr{\nabla D^\prime_{t}}{\bldt}}{\|\nabla D\|} ds + D^\prime_t \frac{\spr{D^{\prime\prime}\bldn}{\bldt} }{\|\nabla D\|^2} ds
\overset{\text{\eqref{speed1}}}{=} \lambda(t,\bldr) - \frac{\spr{\nabla D^\prime_{t}}{\bldt}}{\|\nabla D\|} ds
\\
- \lambda \frac{\spr{D^{\prime\prime}\bldn}{\bldt} }{\|\nabla D\|} ds + \so(ds)
=
\lambda(t,\bldr) - \frac{\spr{\nabla D^\prime_{t}+ \lambda D^{\prime\prime}\bldn}{\bldt}}{\|\nabla D\|} ds
\\
+ \so(ds)
\overset{\text{\eqref{alpha}}}{=} \lambda + \omega ds + \so (ds); \quad
\lambda(t,\bldr+\bldn ds) = \lambda(t,\bldr)
\\
- \frac{\spr{\nabla D^\prime_{t}+ \lambda D^{\prime\prime}\bldn}{\bldn}}{\|\nabla D\|} ds + \so(ds) \overset{\text{\eqref{alpha}}}{=}
\lambda - v_\rho ds + \so (ds).
\end{multline*}
Formulas \eqref{frenet-serrat} are the Frenet-Serrat equations. Furthermore
\begin{multline*}
N(t,\bldr+\bldn ds) = \frac{\nabla D(t,\bldr+\bldn ds)}{\|\nabla D(t,\bldr+\bldn ds)\|} = \bldn + \frac{D^\pp \bldn}{\|\nabla D\|}ds \\ - \nabla D \frac{\spr{D^\pp \bldn}{\nabla D}}{\|\nabla D\|^3}ds + \so(ds)
 = \bldn + \frac{D^\pp \bldn - \bldn \spr{D^\pp \bldn}{\bldn} }{\|\nabla D\|}ds \\ + \so(ds)
 = \bldn + \frac{\spr{D^\pp \bldn}{\bldt} }{\|\nabla D\|}\bldt ds + \so(ds) \overset{\text{\eqref{tau}}}{=} \bldn + \tau_\rho \bldt ds + \so(ds).
  \end{multline*}
This yields the entire \eqref{nnn} since $\bldn = \mathscr{R}_{\frac{\pi}{2}} \bldt, \bldt = - \mathscr{R}_{\frac{\pi}{2}} \bldn$;  \eqref{n=omega} follows from the definition of $\omega$. \epf

\bibliographystyle{plain}
  \bibliography{Hamidref}
 \end{document}